\theoremstyle{plain}
\newtheorem{thm}{Theorem}[section]
\newtheorem{lemma}[thm]{Lemma}
\newtheorem{cor}[thm]{Corollary}
\theoremstyle{definition}
\newtheorem{defn}[thm]{Definition}
\newtheorem{rmk}[thm]{Remark}
\newtheorem{ex}[thm]{Example}
\newcommand{\address}[1]{\gdef\@address{#1}}
\newcommand{\email}[1]{\gdef\@email{\url{#1}}}
\newcommand{\emaill}[1]{\gdef\@emaill{\url{#1}}}
\newcommand{\@endstuff}{\par\vspace{\baselineskip}\noindent\small
\begin{tabular}{@{}l}\scshape\@address\\\textit{E-mail address:} \@email\\\textit{E-mail address:} \@emaill\end{tabular}}
\address{Department of Mathematics, The University of Texas at Austin,\\ \scshape \hspace*{-.5cm}2515 Speedway, Stop C1200, Austin TX 78712-1202, USA}
\email{caffarel@math.utexas.edu}
\title{Fully Nonlinear Equations with Applications to Grad Equations in Plasma Physics}
\author{Luis A. Caffarelli and Ignacio Tomasetti\footnote{Authors are supported by the NSF Grant DMS 1500871.}}
\begin{document}

\maketitle

\begin{abstract} In this paper we generalize an equation studied by Mossino and Temam in \cite{mossinotemam}, to the fully nonlinear case. This equation arises in plasma physics as an approximation to Grad equations, which were introduced by Harold Grad in \cite{grad}, to model the behavior of plasma confined in a toroidal vessel. We prove existence of a $W^{2,p}$-viscosity solution and regularity up to $C^{1,\alpha}(\overline{\Omega})$ for any $\alpha<1$(we improve this regularity near the boundary). The difficulty of this problem lays on a right hand side which involves the measure of the superlevel sets, making the problem nonlocal.
\end{abstract}

\section*{Introduction}

We will consider $W^{2,p}$-viscosity solutions $u:\Omega\subset\mathds{R}^n\longrightarrow\mathds{R}$ for
\begin{equation}\label{pde} 
  \begin{cases}
    F(D^2u(x))=g\Big(\vert u\geq u(x)\vert\Big) & \text{in $\Omega$}\\
    u=\psi & \text{on $\partial\Omega$}
  \end{cases}
\end{equation}
where $\Omega\subset\mathds{R}^n$ is an open, bounded and connected set, with $C^{1,1}$ boundary. The operator $F:\mathcal{S}\longrightarrow\mathds{R}$, is a convex, uniformly elliptic operator with ellipticity constants $0<\lambda\leq\Lambda$, where $\mathcal{S}:=\{$real n$\times$n symmetric matrices$\}$. For simplicity we will assume $F(0)=0$. We will also require that $F$ satisfies the following structure condition
\begin{equation}\label{structurecondition}
\mathcal{M}^-(M-N)\leq F(M)-F(N)\leq\mathcal{M}^+(M-N),
\end{equation}
for all $M,N\in\mathcal{S}$. Here, $\mathcal{M}^-$ and $\mathcal{M}^+$ are the extremal Pucci operators
$$\mathcal{M}^-(M)=\lambda\sum_{e_i>0}e_i+\Lambda\sum_{e_i<0}e_i,$$
and
$$\mathcal{M}^+(M)=\Lambda\sum_{e_i>0}e_i+\lambda\sum_{e_i<0}e_i,$$
where $e_i=e_i(M)$ are the eigenvalues of $M$. In the right hand side of \eqref{pde}, $\vert\cdot\vert$ denotes the n-dimensional Lebesgue measure, and $g:[0;\vert\Omega\vert]\longrightarrow\mathds{R}$ is a continuous function. We will adopt the notation 
$$u\geq u(x):=\{y\in \Omega:u(y)\geq u(x)\}$$
for the superlevel sets of $u$. Finally, we consider a boundary value $\psi\in W^{2,p}(\Omega)$ for $p>n$.
\bigskip

The motivation to study this problem is to generalize Grad equations in plasma physics, and its approximations, to nonlinear operators. These equations were introduced by Harold Grad in \cite{grad}, and appear in the literature as Queer Differential Equations(QDE), or Grad Equations. They arise in modelling plasma, which is confined under magnetic forces in a toroidal container. Grad noticed that a simplified version of plasma equations was possible using $u^*$, the increasing rearrangement of $u$:
$$u^{*}(t):=\inf\{s:\vert u<s\vert\geq t\}.$$
Here is where we start building a connection with our approximation problem (\ref{pde}). Notice that heuristically, $u^*$ is the inverse of the measure of the sublevel sets of $u$. In \cite{grad}, he demonstrated that there are profile functions $\mu$ and $\nu$ which are prescribed by the dynamics of the plasma; consequently his equation reads
$$\Delta u=-\mu'(u)(u^*{'})^\gamma-\gamma\mu(u)(u^{*}{'})^{\gamma-2}u^{*}{''}-\frac{1}{2}(\nu^2(u)){'}(u^{*}{'})^2-\nu^2(u)u^{*}{''}$$
for some power $\gamma$. For clarity we avoided the arguments: $u$ and its derivatives are evaluated at some point $x$ while the rearrangements and its derivatives are evaluated at $t:=\vert u<u(x)\vert$. Many authors attacked the problem trying to approximate these equations. The first one was introduced by Roger Temam in \cite{temam}, and then improved by Mossino and Temam in \cite{mossinotemam}. They studied properties of directional derivatives of the rearrangement function, and proved existence results for
$$\Delta u(x)=g(\vert u<u(x)\vert,u(x))+f(x).$$
Years later, Laurence and Stredulinsky, in \cite{lsnewapproach} and \cite{lsbootstrap}, studied a model equation, closer to Grad's formulation. They considered the particular case when $\gamma=2$, $\mu\equiv 1/2$ and $\nu\equiv 0$ obtaining
$$\Delta u(x)=-u^{*}{''}(\vert u<u(x)\vert).$$
Even this simplified case presents many difficulties. The authors introduced a very interesting approach to the problem: they described an approximation with solutions to a $N-$free boundary problem. In order to apply this process they assumed extra regularity for the level sets of a solution, which is mentioned later in Section 3.
\bigskip

The idea behind this paper is the following: all of these previous papers addressed the problem with a variational method for the Laplacian; instead, we will use a viscosity approach for a general family of fully nonlinear operators. A similar equation to the one of Mossino and Temam is studied, and \textbf{even for the case with the Laplacian we improve the regularity results}. 
\bigskip

The paper is organized as follows: In the first section we cite some preliminary definitions. Mainly, we state the basics of $W^{2,p}$-viscosity solutions. The classic viscosity solutions' theory does not apply to this particular problem because of our right hand side in (\ref{pde}). Disregarding the regularity of $g$ and $u$, we notice that having $\vert u=c\vert>0$ for some constant $c$ makes the right hand side of our equation discontinuous. Therefore, we adopt this $W^{2,p}$-viscosity notion defined in \cite{ccks} by Caffarelli, Crandall, Kocan and Swiech  which allows merely measurable ``ingredients''. In their paper they proved existence and interior $W^{2,p}$-estimates for solutions to an equation with a fixed right hand side $f(x)$. Strongly based on their results, Winter in \cite{winter} extended this regularity up to the boundary proving global $W^{2,p}$-estimates for viscosity solutions and an existence result for $W^{2,p}$-strong solutions. For clarity in the presentation, the results from the literature that will be used through the paper will be addressed at the Appendix A.

In section 2 we state and prove the main theorem of existence and global regularity. The idea of the proof is to
\begin{itemize}
\item freeze $u$ in the right hand side
\item solve the resulting equation using \cite{ccks} theory
\item build a sequence of right hand sides and solutions
\item use a fixed point argument and a convergence theorem to find a solution
\end{itemize}

In section 3 we prove more regularity under additional hypothesis. As long as $\vert\nabla u\vert$ is uniformly bounded below, or equivalently, if we have a uniform interior ball condition for the level sets of $u$, then we have $C^{0,\alpha}$ regularity for the right hand side. This estimate turns into $C^{2,\alpha}$ regularity for the solution $u$. We cannot ensure regularity for the level sets, but if we start with a regular enough domain, say $\partial\Omega$ with a uniform interior ball condition, then we gain $C^{2,\alpha}$ regularity for $u$ in a neighborhood of the boundary.

\section{Preliminary Definitions}

First we are going to present the definitions of viscosity solutions for fully nonlinear equations with measurable ingredients, described in the paper of Caffarelli-Crandall-Kocan-Swiech \cite{ccks}. In this setting we work with the problem
\begin{equation}\label{pdegeneral} 
  \begin{cases}
    F(D^2u(x))=f(x) & \text{in $\Omega$}\\
    u=\psi & \text{on $\partial\Omega$}
  \end{cases}
\end{equation}
where our right hand side is a fixed measurable function $f$.

\begin{defn}
Let be $F$ a uniformly elliptic operator, $f\in L^{p}(\Omega)$ for $p>n/2$. Let $u:\Omega\longrightarrow\mathds{R}$ be a continuous function, we say it is a \textbf{$W^{2,p}$-viscosity subsolution} of \eqref{pdegeneral} in $\Omega$, if $u\leq \psi$ on $\partial\Omega$ and the following holds: for all $\varphi\in W^{2,p}(\Omega)$ such that $u-\varphi$ has a local maximum at $x_0\in \Omega$ then 
$$ess\limsup_{x\rightarrow x_0}F(D^2\varphi(x))-f(x)\geq 0.$$
We define supersolutions in the same way; $u$ is a \textbf{$W^{2,p}$-viscosity supersolution} of \eqref{pdegeneral} in $\Omega$, if $u\geq \psi$ on $\partial\Omega$ and the following holds: for all $\varphi\in W^{2,p}(\Omega)$ such that $u-\varphi$ has a local minimum at $x_0\in \Omega$ then 
$$ess\liminf_{x\rightarrow x_0}F(D^2\varphi(x))-f(x)\leq 0.$$
\end{defn}

\begin{rmk}
We can also use this alternative definition for $W^{2,p}$-viscosity subsolutions. For all $\varphi\in W_{loc}^{2,p}(\Omega)$, for all $\varepsilon>0$, and $O\subset\Omega$ open such that
$$F(D^2\varphi(x))-f(x)\leq -\varepsilon,$$
a.e. in $O$, then $u-\varphi$ cannot have a local maximum in $O$.
\end{rmk}

Because we will use Winter's results, we also add the definition of $W^{2,p}$-strong subsolutions. 
\begin{defn}
In the same setting as before, $u$ is a \textbf{$W^{2,p}$-strong subsolution} of \eqref{pdegeneral} in $\Omega$, if $u\leq \psi$ on $\partial\Omega$ and
$$F(D^2u(x))\geq f(x)$$
a.e. in $\Omega$.
\end{defn}

\section{Main result}

In this section we state and prove existence and a first global regularity result.
\begin{thm}\label{main}
Our problem (\ref{pde})
\begin{equation*}
  \begin{cases}
    F(D^2u(x))=g\Big(\vert u\geq u(x)\vert\Big) & \text{in $\Omega$}\\
    u=\psi & \text{on $\partial\Omega$}
  \end{cases}
\end{equation*}
with the setting given in the introduction, has a $W^{2,p}$-viscosity solution $u$. Furthermore, $u\in W^{2,p}(\Omega)$ and we have the following estimate
$$\Vert u\Vert_{W^{2,p}(\Omega)}\leq C\Big[\Vert u\Vert_{L^{\infty}(\Omega)}+\Vert \psi\Vert_{W^{2,p}(\Omega)}+\Vert g(\vert u\geq u(x)\vert)\Vert_{L^{p}(\Omega)}\Big].$$
\end{thm}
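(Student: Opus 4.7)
The plan is to apply the Schauder fixed point theorem to a freezing map on $C(\overline{\Omega})$. Given $v \in C(\overline{\Omega})$, set
$$f_v(x) := g\bigl(\vert\{v \geq v(x)\}\vert\bigr),$$
which is measurable and uniformly bounded by $\Vert g\Vert_{L^{\infty}([0,\vert\Omega\vert])}$. The global existence result of Winter \cite{winter}, built on the $W^{2,p}$-viscosity framework of \cite{ccks}, then produces a unique $W^{2,p}$-strong solution $u =: T(v) \in W^{2,p}(\Omega)\cap C(\overline{\Omega})$ of $F(D^2u) = f_v$ in $\Omega$ with $u = \psi$ on $\partial\Omega$, satisfying the global estimate
$$\Vert u\Vert_{W^{2,p}(\Omega)} \leq C\Bigl[\Vert u\Vert_{L^{\infty}(\Omega)} + \Vert \psi\Vert_{W^{2,p}(\Omega)} + \Vert f_v\Vert_{L^{p}(\Omega)}\Bigr].$$
Since every $W^{2,p}$-strong solution is a $W^{2,p}$-viscosity solution, $T(v)$ is the natural candidate iterate.

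A priori bounds come from the ABP inequality: $\Vert T(v)\Vert_{L^{\infty}} \leq \Vert\psi\Vert_{L^{\infty}} + C\Vert f_v\Vert_{L^{n}}$ is controlled uniformly in terms of $\Omega$, $\psi$, $g$, and the ellipticity constants. Consequently, $T$ sends a sufficiently large closed ball $K \subset C(\overline{\Omega})$ into itself. Since $p > n$, Morrey's embedding provides the compact inclusion $W^{2,p}(\Omega) \Subset C^{1,\alpha}(\overline{\Omega})$, and combined with the uniform $W^{2,p}$-bound this yields that $T(K)$ is precompact in $C(\overline{\Omega})$.

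The main obstacle is continuity of $T$. The freezing map $v \mapsto f_v$ is genuinely discontinuous at functions whose level sets have positive measure, since a small perturbation can move the value $v(x)$ across such a plateau and produce an order-one jump in $\vert\{v \geq v(x)\}\vert$. Nevertheless, if $v_k \to v$ uniformly, the monotone function $c \mapsto \vert\{v \geq c\}\vert$ has at most countably many discontinuities, and a sandwich argument using uniform convergence shows $\vert\{v_k \geq v_k(x)\}\vert \to \vert\{v \geq v(x)\}\vert$ at every $x$ whose image $v(x)$ avoids these jumps. Continuity of $g$ then gives $f_{v_k} \to f_v$ pointwise off a countable union of plateaus of $v$; since $f_v$ is constant on each such plateau and $\{f_{v_k}\}$ is equibounded, dominated convergence yields $f_{v_k} \to f_v$ in $L^p(\Omega)$. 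The stability theorem for $W^{2,p}$-viscosity solutions under $L^p$-convergence of right-hand sides in \cite{ccks}, combined with the uniform global $W^{2,p}$-estimate, then forces $T(v_k) \to T(v)$ in $C(\overline{\Omega})$.

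With continuity and compactness established, Schauder's theorem produces a fixed point $u = T(u) \in W^{2,p}(\Omega)\cap C(\overline{\Omega})$ that satisfies $F(D^2u) = g(\vert\{u \geq u(x)\}\vert)$ a.e.\ in $\Omega$ with $u = \psi$ on $\partial\Omega$, and is therefore a $W^{2,p}$-viscosity solution of \eqref{pde}. The claimed estimate follows by invoking Winter's global regularity estimate for this $u$ with right-hand side $g(\vert\{u \geq u(x)\}\vert)$.
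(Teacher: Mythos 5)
Your overall strategy (freeze the right-hand side, invoke Winter's existence and $W^{2,p}$-estimates, and close with a fixed-point theorem) is the right starting point and matches the paper's scaffolding, but there is a genuine gap in the continuity argument, and it is exactly the gap the paper's $\varepsilon$-regularization is designed to close. You claim that if $v_k\to v$ uniformly then $f_{v_k}\to f_v$ in $L^p$, arguing that pointwise convergence fails only on ``a countable union of plateaus of $v$'' and that dominated convergence rescues you ``since $f_v$ is constant on each such plateau.'' The flaw is that the countable union of plateaus of $v$ can have \emph{positive Lebesgue measure} in $\Omega$: the set of critical levels $c$ with $\vert\{v=c\}\vert>0$ is countable, but each individual plateau $\{v=c\}\subset\Omega$ can carry positive measure. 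On such a plateau, pointwise convergence of $f_{v_k}$ to $f_v$ genuinely fails. For example, if $v_k$ tilts the plateau slightly so that $v_k$ is strictly monotone there, then for $x$ in the plateau the quantity $\vert\{v_k\geq v_k(x)\}\vert$ sweeps through all values between $\vert\{v>c\}\vert$ and $\vert\{v\geq c\}\vert$, while $f_v(x)\equiv g(\vert\{v\geq c\}\vert)$ stays put; if $g$ is non-constant on that interval, $\Vert f_{v_k}-f_v\Vert_{L^p}$ stays bounded away from zero. The fact that $f_v$ is constant on the plateau is irrelevant — dominated convergence needs a.e.\ pointwise convergence, which you do not have. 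The paper explicitly flags this obstruction, noting that continuity of the naive freezing map fails ``not even if we require more regularity for $v$ (not even $C^\infty$ works).''

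The paper's remedy is to replace $f_v(x)=g(\vert\{v\geq v(x)\}\vert)$ by the shifted average $f_v^\varepsilon(x)=g\bigl(\tfrac1\varepsilon\int_0^\varepsilon\vert\{v\geq v(x)-h\}\vert\,dh\bigr)$. For each fixed $x$, the potentially bad set of shifts $h\in[0,\varepsilon]$ for which $\vert\{v=v(x)-h\}\vert>0$ is a null set in $h$ (the paper proves this via Rademacher plus coarea; a countable-jump argument on the monotone function $c\mapsto\vert\{v\geq c\}\vert$ also suffices). Averaging over $h$ therefore converts a ``positive-measure plateau in $x$'' problem into an ``a.e.\ $h$'' problem, making $f^\varepsilon_{v_k}\to f^\varepsilon_v$ pointwise in $x$ and hence in $L^p$. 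The fixed-point argument (Schaefer, on $Lip(\Omega)$) is then run for each fixed $\varepsilon$, producing $u_\varepsilon$, and a second application of the $W^{2,p}$-stability theorem lets one pass $\varepsilon\to 0$; the monotone convergence of $\tfrac1\varepsilon\int_0^\varepsilon\vert\{u\geq u(x)-h\}\vert\,dh$ down to $\vert\{u\geq u(x)\}\vert$ handles the limiting right-hand side. Without some such regularization your map $T$ is not continuous on $C(\overline\Omega)$, so Schauder does not apply as written, and the proof does not go through.
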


\begin{cor}
Using Sobolev embedding theorem we get that a solution is in $C^{1,\alpha}(\overline{\Omega})$ for any $\alpha<1$, provided that $\psi\in W^{2,p}$ for every $p>n$.
\end{cor}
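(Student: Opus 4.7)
The plan is to deduce the corollary directly from Theorem~\ref{main} combined with the Morrey--Sobolev embedding; no new viscosity or nonlocal machinery is needed. First, I would note that for any $W^{2,p}$-viscosity solution $u$ produced by Theorem~\ref{main}, the right-hand side $x \mapsto g(|\{u \geq u(x)\}|)$ is bounded (since $g$ is continuous on the compact interval $[0,|\Omega|]$), and therefore lies in $L^q(\Omega)$ for every finite $q$. Under the hypothesis that $\psi \in W^{2,p}(\Omega)$ for every $p>n$, the estimate in Theorem~\ref{main} then places $u$ in $W^{2,p}(\Omega)$ for every such $p$.

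Next, I would invoke the Morrey--Sobolev embedding. Given an arbitrary $\alpha<1$, choose $p > n/(1-\alpha)$, so that $1-n/p > \alpha$. On the bounded $C^{1,1}$ domain $\Omega$, the embedding $W^{2,p}(\Omega)\hookrightarrow C^{1,1-n/p}(\overline{\Omega})$ is available for $p>n$, and applying it to the $u$ from the previous paragraph gives $u\in C^{1,1-n/p}(\overline{\Omega})\subseteq C^{1,\alpha}(\overline{\Omega})$. Since $\alpha<1$ was arbitrary, the claim follows.

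There is essentially no analytic obstacle here --- all the genuine work is carried out in Theorem~\ref{main}. The one subtlety worth flagging is the need to ensure that a \emph{single} $u$ enjoys $W^{2,p}$ regularity for every $p>n$, rather than a priori different solutions for different values of $p$. This is handled by a short bootstrap: once Theorem~\ref{main} provides a $u \in W^{2,p_0}(\Omega)$ for some $p_0>n$, the $L^\infty$-bound on its right-hand side noted above allows Winter's global $W^{2,p}$ estimate (quoted in Appendix~A) to be applied for arbitrary $p<\infty$, upgrading that same $u$ to $W^{2,p}(\Omega)$ for every $p>n$. With this observation in hand the remainder of the argument is purely a bookkeeping exercise with Sobolev exponents.
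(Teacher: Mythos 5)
Your proposal is correct and follows exactly the route the paper has in mind: apply the $W^{2,p}$ estimate from Theorem~\ref{main} for all $p>n$ (using $\psi\in W^{2,p}$ and the observation that the right-hand side is bounded, hence in every $L^q$) and then invoke Morrey's embedding $W^{2,p}(\Omega)\hookrightarrow C^{1,1-n/p}(\overline{\Omega})$, letting $p\to\infty$. The bootstrap you flag --- fixing one $u\in W^{2,p_0}$ from Theorem~\ref{main}, noting $f_u\in L^\infty$, and then using Winter's global estimate to upgrade that \emph{same} $u$ to $W^{2,p}$ for every $p>n$ --- is a worthwhile detail the paper leaves implicit, and it is exactly the right way to avoid the pitfall of obtaining a possibly different solution for each exponent.
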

\bigskip

The structure of the proof for Theorem \ref{main} is somehow simple; we set an approximating problem \eqref{pdeepsilon}, we prove the existence of a solution for it and then we take the limit to obtain the solution to \eqref{pde}. Before presenting this approximating problem in Lemma \ref{approxlemma}, we give a quick explanation on the reasoning behind it. Recall that the results from the appendix will be used next: existence and uniqueness, fixed point, and convergence.

If in \eqref{pde} we freeze a function $v\in Lip(\Omega)$ for the right hand side, i.e. $f_v(x):=g\Big(\vert \{y\in \Omega:v(y)\geq v(x)\}\vert\Big)$, we get
\begin{equation}\label{pdefixedrighthandside}
  \begin{cases}
    F(D^2u(x))=f_v(x) & \text{in $\Omega$}\\
    u=\psi & \text{on $\partial\Omega$}.
  \end{cases}
\end{equation}
Then the hypothesis of Theorem \ref{winter} are satisfied and there exists a unique $W^{2,p}$-viscosity solution $u$ to \eqref{pdefixedrighthandside}. The next step would be to apply the fixed point Theorem \ref{fixed} for the application $T(v)=u$. The problem is that we cannot ensure continuity for $T$ because of the right hand side of \eqref{pdefixedrighthandside}. Not even if we require more regularity for $v$(not even $C^\infty$ works). We will overcome this inconvenient solving an auxiliary problem with a smoothened right hand side which allows us to perform the fixed point argument. Given $v\in Lip(\Omega)$, $\varepsilon>0$, consider
\begin{equation}\label{pdefixedrighthandsideepsilon}
  \begin{cases}
    F(D^2u(x))=f_v^\varepsilon(x):=g\Big(\dfrac{1}{\varepsilon}\int_0^\varepsilon\vert v\geq v(x)-h\vert dh\Big) & \text{in $\Omega$}\\
    u=\psi & \text{on $\partial\Omega$}.
  \end{cases}
\end{equation}
Because $f_v^\varepsilon\in L^p(\Omega)$, using Theorem \ref{winter} we have existence and uniqueness of a $W^{2,p}$-viscosity solution $u\in W^{2,p}(\Omega)$ to \eqref{pdefixedrighthandsideepsilon} with the estimate
\begin{equation*}
  \Vert u\Vert_{W^{2,p}(\Omega)}\leq C\Big[\Vert u\Vert_{L^{\infty}(\Omega)}+\Vert \psi\Vert_{W^{2,p}(\Omega)}+\Vert f_v^\varepsilon\Vert_{L^{p}(\Omega)}\Big].
\end{equation*}
Now we can state our approximation lemma.

\begin{lemma}\label{approxlemma}
Given $\varepsilon>0$, there exists a $W^{2,p}$-viscosity solution $u_\varepsilon$ to
\begin{equation}\label{pdeepsilon}
  \begin{cases}
    F(D^2u(x))=f_u^\varepsilon(x), & \text{in $\Omega$}.\\
    u=\psi, & \text{on $\partial\Omega$}.
  \end{cases}
\end{equation}
\end{lemma}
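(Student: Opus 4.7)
The plan is to apply the Schauder-type fixed point Theorem \ref{fixed} to the map $T_\varepsilon:v\mapsto u$, where $u$ is the unique $W^{2,p}$-viscosity solution of \eqref{pdefixedrighthandsideepsilon} produced by Theorem \ref{winter}. I would work in the Banach space $X=C(\overline{\Omega})$ and on the convex, closed subset
\[K=\bigl\{v\in C(\overline{\Omega}):\, v|_{\partial\Omega}=\psi,\ \|v\|_{L^\infty(\Omega)}\leq M\bigr\},\]
where $M$ is chosen so that $T_\varepsilon(K)\subset K$; this is possible because continuity of $g$ on $[0,|\Omega|]$ gives the uniform bound $\|f_v^\varepsilon\|_{L^\infty(\Omega)}\leq G:=\|g\|_{L^\infty([0,|\Omega|])}$, and an ABP estimate then forces $\|T_\varepsilon(v)\|_{L^\infty}\leq \|\psi\|_{L^\infty}+C\operatorname{diam}(\Omega)G$. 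The uniform $W^{2,p}$ estimate from Theorem \ref{winter} combined with the compact embedding $W^{2,p}(\Omega)\hookrightarrow\hookrightarrow C(\overline{\Omega})$ (using $p>n$) shows that $T_\varepsilon$ is a compact map of $K$ into $K$.

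The main technical step---and the reason the right hand side had to be smoothened---is establishing continuity of $T_\varepsilon$ on $K$. Given $v_n\to v$ uniformly, I would argue $f_{v_n}^\varepsilon\to f_v^\varepsilon$ in $L^p(\Omega)$ and then invoke the convergence theorem for $W^{2,p}$-viscosity solutions from the appendix together with uniqueness. Fix $x\in\Omega$: for a given $v$, at most countably many values $s$ satisfy $|\{v=s\}|>0$, so for a.e. $h\in[0,\varepsilon]$ the level set $\{v=v(x)-h\}$ is Lebesgue null. For such $h$, the uniform convergence of $v_n(\cdot)-v_n(x)$ to $v(\cdot)-v(x)$ combined with dominated convergence on $\Omega$ yields $|v_n\geq v_n(x)-h|\to|v\geq v(x)-h|$. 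A second application of dominated convergence in $h$, together with continuity of $g$, produces the pointwise convergence $f_{v_n}^\varepsilon(x)\to f_v^\varepsilon(x)$; a third, in $x$ with the uniform bound $G$, upgrades this to $L^p(\Omega)$. The uniform $W^{2,p}$ bound on $u_n=T_\varepsilon(v_n)$ then lets me extract a limit which, by stability plus uniqueness of the limit problem, is forced to equal $T_\varepsilon(v)$; as this holds for every subsequence, the whole sequence converges uniformly, and $T_\varepsilon$ is continuous.

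The main obstacle is precisely this continuity argument: without the inner $\varepsilon$-average, the map would be discontinuous at any $v$ whose level sets carry positive measure, since a small perturbation of $v$ could jump the value $|\{v\geq v(x)\}|$ by a fixed amount. The mollification turns this selection of a single superlevel measure into an integral over a one-parameter family of levels, and countability of the ``bad'' level values makes each integrand converge almost everywhere---exactly what dominated convergence needs. Once continuity is in hand, Theorem \ref{fixed} yields a fixed point $u_\varepsilon=T_\varepsilon(u_\varepsilon)$, which is the desired $W^{2,p}$-viscosity solution of \eqref{pdeepsilon}.
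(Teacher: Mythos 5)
Your proposal follows the same overall strategy as the paper---define $T_\varepsilon(v)$ via Theorem \ref{winter}, obtain compactness from the uniform $W^{2,p}$ bound and ABP, prove continuity by passing the right-hand sides to the limit in $L^p$ and invoking Theorem \ref{convergence} together with uniqueness, and close with the fixed-point theorem---but the way you justify the central technical step is genuinely different and worth noting. Both arguments hinge on showing that for a.e.\ $h\in[0,\varepsilon]$ the level set $\{v=v(x)-h\}$ is Lebesgue-null, so that $|v_n\geq v_n(x)-h|\to|v\geq v(x)-h|$ can be upgraded to convergence of the $h$-average. The paper chooses the ambient space $Lip(\Omega)$ precisely so it can combine a Rademacher-type statement ($\nabla v=0$ a.e.\ on $v^{-1}(\alpha)$) with the coarea formula to conclude $\mathcal{H}^{n-1}(v^{-1}(\alpha)\cap\{\nabla v=0\})=0$ for a.e.\ $\alpha$, hence $|v^{-1}(\alpha)|=0$. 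Your observation---that the sets $\{v=s\}$ are pairwise disjoint of finite total measure, so at most countably many can have positive measure---reaches the same conclusion more cheaply, requires no regularity of $v$ at all, and lets you work in $C(\overline{\Omega})$ rather than $Lip(\Omega)$. The remaining deviation is cosmetic: you verify Schauder's invariance $T_\varepsilon(K)\subset K$ for a closed convex bounded $K$, whereas Theorem \ref{fixed} (Schaefer) asks for boundedness of $\{v:\,v=\gamma T_\varepsilon(v),\ \gamma\in[0,1]\}$; your uniform a priori bound $\|T_\varepsilon(v)\|_{L^\infty}\leq\|\psi\|_{L^\infty}+C|\Omega|^{1/p}G$, which holds for every $v$ and not only $v\in K$, gives the Schaefer hypothesis immediately as well, so both routes close.
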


\begin{proof}
The existence is proved, as we remarked, using the fixed point Theorem \ref{fixed}. We define $T:Lip(\Omega)\longrightarrow Lip(\Omega)$ as the application defined by \eqref{pdefixedrighthandsideepsilon} and the existence and uniqueness theorem, i.e., $T(v)=u$. In order to prove the hypothesis required for $T$, we will make use of the convergence Theorem \ref{convergence}.

\textit{Continuity of T:} If we consider $v_k\overset{Lip}{\longrightarrow}v$, then, does $u_k:=T(v_k)\overset{Lip}{\longrightarrow}T(v)$?
We know that $u_k\in W^{2,p}(\Omega)$ and 
\begin{equation*}
  \Vert u_k\Vert_{W^{2,p}(\Omega)}\leq C\Big[\Vert u_k\Vert_{L^{\infty}(\Omega)}+\Vert \psi\Vert_{W^{2,p}(\Omega)}+\Vert f_v^\varepsilon\Vert_{L^{p}(\Omega)}\Big]\leq \widetilde{C}
\end{equation*}
with $\widetilde{C}$ independent on $k$. This is achieved using Alexandroff-Bakelman-Pucci (ABP) estimates
 \begin{equation*}
  \sup_{\Omega}u_k\leq \sup_{\partial\Omega}u_k+C\Vert f_v^\varepsilon\Vert_{L^{p}(\Omega)}
\end{equation*}
and the equivalent for the $\inf_{\Omega}u_k$. This ABP version for measurable ingredients is stated in Caffarelli-Crandall-Kocan-Swiech(Proposition 3.3 in \cite{ccks}). We also have the estimate
 \begin{equation*}
  \Vert f_v^\varepsilon\Vert_{L^{p}(\Omega)}\leq \vert\Omega\vert^{1/p}g\big(\vert\Omega\vert\big)
\end{equation*}
which makes $\widetilde{C}$ even independent on $\varepsilon$.
Now consider $u_{k_j}$ any subsequece of $u_k$. Using Rellich-Kondrachov theorem we can find a subsequence $u_{k_{j_i}}$ of $u_{k_j}$(for simplicity we will use the notation $u_i:=u_{k_{j_i}}$) converging to some $u_\varepsilon$ in the Lipschitz norm, i.e., $u_i\overset{Lip}{\longrightarrow}u_\varepsilon$. If we can prove that $u_\varepsilon$ is the unique $W^{2,p}$-viscosity solution to \eqref{pdefixedrighthandsideepsilon}($T(v)=u_\varepsilon$), then we have the convergence $u_k=T(v_k)\overset{Lip}{\longrightarrow}u_\varepsilon=T(v)$. Therefore, we obtain the continuity for $T$.

Every $u_i\in C^0(\Omega)$ is the unique $W^{2,p}$-viscosity solution to \eqref{pdefixedrighthandsideepsilon} with $v_i$ in the right hand side. We have $\Omega_i=\Omega$, and $F_i=F$ fixed for every $i$. The convergence $u_i\overset{Lip}{\longrightarrow}u_\varepsilon$ implies the locally uniformly convergence. So we only need to check the convergence 
$$\Vert f^\varepsilon_v(x)-f^\varepsilon_{v_i}(x)]\Vert_{L^{p}(B_r(x_0))}\longrightarrow 0$$
in order to satisfy all the hypothesis in the convergence Theorem \ref{convergence}. We know that $v_i\overset{Lip}{\longrightarrow}v$, then $\delta_i:=\Vert v_i-v\Vert_{L^{\infty}}\longrightarrow 0$. Thus let $x$ and $h$ fixed,
$$\vert v_i\geq v_i(x)-h\vert\leq\vert v+\delta_i\geq v(x)-\delta_i-h\vert\searrow\vert v\geq v(x)-h\vert$$ 
as $i\longrightarrow\infty$ and also
$$\vert v_i\geq v_i(x)-h\vert\geq\vert v-\delta_i\geq v(x)+\delta_i-h\vert\nearrow\vert v>v(x)-h\vert.$$ 
We can show that $\vert v>v(x)-h\vert=\vert v\geq v(x)-h\vert$ for a.e. $h$ in $[0;\varepsilon]$. This happens if and only if $\vert v=v(x)-h\vert=\vert v^{-1}(v(x)-h)\vert=0$ for a.e. $h\in[0;\varepsilon]$. A corollary of Rademacher theorem, says that if $v$ is a Lipschitz function then for a.e. $y\in v^{-1}(\alpha)$, $\nabla v(y)=0$. Therefore $\vert v^{-1}(v(x)-h)\vert=\vert v^{-1}(v(x)-h)\cap\{\nabla v(y)=0\}\vert$. Using a corollary of the Coarea formula we get also that $\mathcal{H}^{n-1}\Big( v^{-1}(v(x)-h)\cap\{\nabla v(y)=0\}\Big)=0$. Here $\mathcal{H}^{n-1}$ stands for the $(n-1)$-dimensional Hausdorff measure. Then for every $x\in\Omega$ we get the convergence 
$$\vert v_i\geq v_i(x)-h\vert\longrightarrow\vert v\geq v(x)-h\vert$$ 
for a.e. $h$. Applying the dominated convergence theorem first, and the continuity of $g$ we get
\begin{equation}\label{gconvergence}
g\Big(\dfrac{1}{\varepsilon}\int_0^\varepsilon\vert v_i\geq v_i(x)-h\vert dh\Big)\longrightarrow g\Big(\dfrac{1}{\varepsilon}\int_0^\varepsilon\vert v\geq v(x)-h\vert dh\Big)
\end{equation}
as $i\longrightarrow\infty$. This last result is the pointwise convergence of $f^\varepsilon_{v_i}$ to $f^\varepsilon_{v}$. Again, applying the dominated convergence theorem we get the $L^p$ convergence needed. So all the hypothesis are satisfied to apply the theorem and therefore $T$ is continuous.
 
\textit{Compactness of T:} Let $v_k$ a bounded sequence in $Lip(\Omega)$ then $u_k:=T(v_k)\in W^{2,p}(\Omega)$ is bounded as before. After Rellich-Kondrachov there exists a convergent subsequence.

\textit{Boundedness of the eigenvectors:} We have to prove that the set
$$\Gamma:=\{v\in Lip(\Omega): \exists\gamma\in[0;1] \text{ such that }v=\gamma T(v)\}$$
is bounded. Suppose by contradiction that it is not. First we note that $0\in\Gamma$ with 
$\gamma=0$, and for every $0\neq v\in\Gamma$ the $\gamma$ associated with $v$ is not zero. Suppose then that there exist a sequence of nonzero elements $v_k\in\Gamma$, and a respective sequence $\gamma_k$ such that $v_k=\gamma_kT(v_k)$ and $\Vert v_k\Vert_{Lip(\Omega)}\longrightarrow\infty$. Because $v_k\in Lip(\Omega)$, then $v_k/\gamma_k\in W^{2,p}(\Omega)$ and
$$\Vert v_k\Vert_{Lip(\Omega)}\leq\Vert \dfrac{v_k}{\gamma_k}\Vert_{Lip(\Omega)}\leq C\Vert \dfrac{v_k}{\gamma_k}\Vert_{W^{2,p}(\Omega)}\leq \widetilde{C}$$
which is a contradiction. Therefore $\Gamma$ is bounded.

The hypothesis of Schaefer's theorem are satisfied, so there exists a Lipschitz fixed point $u_\varepsilon$ for $T$, i.e., $u_\varepsilon=T(u_\varepsilon)$. Moreover, by Theorem \ref{winter}, $u_\varepsilon$ is a $W^{2,p}$-viscosity solution to \eqref{pdeepsilon}, which is in $W^{2,p}(\Omega)$.
\end{proof}

The purpose of finding such a $u_\varepsilon$, was to approximate a solution for \eqref{pde}. Then the following question is if we can take the limit $\varepsilon\longrightarrow\infty$. 

\begin{proof}[Proof of Theorem \ref{main}]
For every $\varepsilon>0$ we have a solution $u_\varepsilon\in W^{2,p}(\Omega)$ with uniformly bounded $W^{2,p}$ norm(with respect to $\varepsilon$). Then there exists a subsequence(that we will also call $u_\varepsilon$) and a Lipschitz function $u$ such that $u_\varepsilon\overset{Lip}{\longrightarrow} u$. So $u_\varepsilon\longrightarrow u$ locally uniformly and we will be able to apply again the convergence Theorem \ref{convergence}. In this case we have on the right hand sides, the $L^p$ functions
$$f_u(x):=g\big(\vert u\geq u(x)\vert\big)$$
and
$$f_{u_\varepsilon}^\varepsilon(x):=g\Big(\dfrac{1}{\varepsilon}\int_0^\varepsilon\vert u_\varepsilon\geq u_\varepsilon(x)-h\vert dh\Big).$$
We are left to prove the convergence
$$\Vert f_u-f^\varepsilon_{u_\varepsilon}(x)\Vert_{L^{p}(B_r(x_0))}\longrightarrow 0.$$
By triangle inequality
$$\Vert f_u-f^\varepsilon_{u_\varepsilon}(x)\Vert_{L^{p}(\Omega)}\leq\Vert f_u-f^\varepsilon_{u}(x)\Vert_{L^{p}(\Omega)}+\Vert f_u^\varepsilon-f^\varepsilon_{u_\varepsilon}(x)\Vert_{L^{p}(\Omega)}.$$
We have that the second term goes to zero as in previous calculations \eqref{gconvergence}. We will use a similar argument for bounding the first term.
\begin{align*}
\vert u\geq u(x)\vert&\leq \vert u\geq u(x)-h\vert\\
&=\vert u\geq u(x)\vert+\vert u(x)>u\geq u(x)-h\vert\\
&\leq\vert u\geq u(x)\vert+\vert u(x)>u\geq u(x)-\varepsilon\vert.
\end{align*}
Then
\begin{align*}
\dfrac{1}{\varepsilon}\int_0^\varepsilon\vert u\geq u(x)-h\vert dh&\geq\dfrac{1}{\varepsilon}\int_0^\varepsilon\vert u\geq u(x)\vert dh\\
&=\vert u\geq u(x)\vert
\end{align*}
and
\begin{align*}
\dfrac{1}{\varepsilon}\int_0^\varepsilon\vert u\geq u(x)-h\vert dh&\leq
 \dfrac{1}{\varepsilon}\int_0^\varepsilon\vert u\geq u(x)\vert+\vert u(x)>u\geq u(x)-\varepsilon\vert dh\\
 &=\vert u\geq u(x)\vert+\vert u(x)>u\geq u(x)-\varepsilon\vert.
\end{align*}
Therefore 
\begin{equation*}
\dfrac{1}{\varepsilon}\int_0^\varepsilon\vert u\geq u(x)-h\vert dh\searrow \vert u\geq u(x)\vert
\end{equation*}
as $\varepsilon\longrightarrow\infty$. Accordingly we obtained pointwise convergence for $f^\varepsilon_u$ to $f_u$, which after the dominated convergence theorem implies the convergence on the $L^p$ norm.

Hypothesis of Theorem \ref{convergence} are satisfied and we finally obtain our main result: $u$ a $W^{2,p}$-viscosity solution to \eqref{pde}, in $W^{2,p}(\Omega)$ and  with the corresponding estimates.
\end{proof}

The last remark of this section is that we obtain an explicit formula for the $0$ Dirichlet problem in a ball. 
\begin{ex}\label{ball}
When $\Omega=B_r(x_0)$, $F=\Delta$, $g(t)=-t$ and $\psi=0$ we have the solution:
\begin{equation*}
\tilde{u}(x)=\frac{\omega_n}{2n(n+2)}\Big[r^{n+2}-\vert x-x_0\vert^{n+2}\Big]
\end{equation*}
where $\omega_n$ is the measure of the n-dimensional unit ball. In a similar way we can prove that $\frac{1}{\Lambda}\tilde{u}$ is a solution when $F=\mathcal{M}^-$(respectively $\frac{1}{\lambda}\tilde{u}$ for $F=\mathcal{M}^+$). We will use this example in the next section to build subsolutions that can be used as barriers to prove gradient bounds.
\end{ex}

\section{Further Regularity}

In order to gain more regularity for our solution $u$ we probably need to get some regularity for the right hand side $f_u$. So far, in the case when $u$ has flat regions, $f_u$ is not even continuous. In principle, this discontinuity does not depend on the regularity of $u$ but on its flat regions. We can prove that under the negativity of $g$, $u$ is not allowed to have these flat regions with positive measure.

\begin{rmk}
Let be $u$ a solution for (\ref{pde}) with right hand side $g<0$ in $(0;\vert\Omega\vert]$, then 
$$\vert u=a\vert = 0$$
for every constant $a\in\mathbb{R}$.
\end{rmk}

\begin{proof}
Suppose that there exists an $a\in\mathds{R}$ such that $A:=\{u=a\}$ satisfies that $\vert A\vert>0$. Then, by a classic result from Stampacchia we obtain that
\begin{equation*}
\nabla u(x)=0 \quad\text{for a.e. $x\in A$}.
\end{equation*}
Now we can define $A':=A\cap\{\nabla u=0\}$, and apply Stampacchia's result again
\begin{equation}\label{laplacianocero}
D^2u(x)=0 \quad\text{for a.e. $x\in A'$}.
\end{equation}
We are left with the set $A'':=A'\cap\{D^2u=0\}$ with the same measure $\vert A''\vert=\vert A'\vert=\vert A\vert>0$. By the definition of $u$ being a $W^{2,p}$-strong supersolution of \eqref{pde}, we have that for a.e $x$ in $\Omega$
$$F(D^2 u(x))-g\Big(\vert u\geq u(x)\vert\Big)\leq 0.$$
Moreover, for $x_0\in A''$, the argument inside $g$ is strictly positive; $\vert u\geq u(x_0)\vert\geq\vert A''\vert>0$. So in the particular case when $g<0$ in $(0;\vert\Omega\vert]$ we get the contradiction
$$F(D^2 u(x_0))-g\Big(\vert u\geq u(x_0)\vert\Big)> 0.$$
\end{proof}

Then, for this specific case we obtain continuity for $f_u$. But we need at least $C^{0,\alpha}$ regularity on $f_u$ in order to apply Schauder type estimates to obtain $u\in C^{2,\alpha}$. We will have this regularity in two particular cases listed in the next two theorems. The first one is an adaptation(simplification) to Laurence and Stredulinsky's theorem and requires an additional lower bound for the gradient.

\begin{thm}[Theorem 3.1 in \cite{lsbootstrap}]\label{gradientbound}
Let $u\in W^{2,p}_0(\Omega)$ with a uniform lower bound $\vert\nabla u\vert>c_0>0$ in the set $\Omega_{t_0}:=\{y\in \Omega:u(y)< t_0\}$, where $t_0<\Vert u \Vert_{L^\infty}$ and $c_0=c_0(t_0)$. Then $f_u\in C^1(\Omega_{t_0})$.
\end{thm}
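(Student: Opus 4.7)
The plan is to reduce the statement to a regularity property of the distribution function of $u$. Setting
$$\mu(t):=|\{y\in\Omega:u(y)\geq t\}|,$$
we have $f_u(x)=g(\mu(u(x)))$. Since $u\in W^{2,p}_0(\Omega)$ with $p>n$, Sobolev embedding gives $u\in C^{1,\alpha}(\overline{\Omega})$. Granting the ambient smoothness of $g$ inherited from the Laurence--Stredulinsky setting, the chain rule reduces the claim to proving $\mu\in C^1((-\infty,t_0))$.

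The first step is to apply the coarea formula on $\Omega_{t_0}$: for every $t<t_0$,
$$\mu(t)=|\{u\geq t_0\}|+\int_t^{t_0}\int_{\{u=s\}}\frac{d\mathcal{H}^{n-1}(y)}{|\nabla u(y)|}\,ds.$$
The uniform bound $|\nabla u|>c_0$ on $\Omega_{t_0}$ controls the inner integrand, so Lebesgue differentiation yields
$$\mu'(t)=-\int_{\{u=t\}}\frac{d\mathcal{H}^{n-1}(y)}{|\nabla u(y)|},\qquad t<t_0.$$

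The key step is then to show this surface integral is continuous (in fact, Lipschitz) in $t$. Because $|\nabla u|\geq c_0$ and $\nabla u\in C^{0,\alpha}$ on $\Omega_{t_0}$, the vector field $V:=-\nabla u/|\nabla u|^2$ is $C^{0,\alpha}$, and its flow $\Phi_s$ satisfies $u\circ\Phi_s=u-s$; hence $\Phi_s$ provides a bi-Lipschitz bijection between $\{u=t\}$ and $\{u=t-s\}$ for $|s|$ small, with tangential Jacobian tending uniformly to $1$ as $s\to 0$. Pushing the surface integral over $\{u=t-s\}$ back to $\{u=t\}$ via $\Phi_s$ produces an integrand converging uniformly to $|\nabla u|^{-1}$, and dominated convergence (with the $L^\infty$ bound $1/c_0$) yields continuity of $\mu'$. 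Composing with $u$ and applying the chain rule delivers $f_u\in C^1(\Omega_{t_0})$.

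The principal obstacle will be justifying the change-of-variables argument cleanly: one has to verify that the pulled-back surface measure on $\{u=t\}$ differs from $d\mathcal{H}^{n-1}$ by a density that is uniformly bounded and converges to $1$, and that the integrand stays dominated by an $L^1$ function on the fixed level set $\{u=t\}$. The hypothesis $|\nabla u|>c_0$ plays its decisive role here, both in making the flow $\Phi_s$ well-defined and $C^{0,\alpha}$-regular, and in bounding $|\nabla u|^{-1}$ inside the integral; without it, the surface integral could degenerate and destroy continuity of $\mu'$.
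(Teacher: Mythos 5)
The paper does not prove this theorem itself: it simply cites Laurence--Stredulinsky (Theorem 3.1 in \cite{lsbootstrap}) and summarizes their method as ``an approximation argument by $C^\infty_0$ functions and coarea formula.'' Your attempt shares the coarea-formula skeleton with the cited proof, but replaces the smooth-approximation step by an argument via the flow of $V=-\nabla u/|\nabla u|^2$, so it is a genuinely different route.

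Your reduction is sound: write $\mu(t):=|\{u\geq t\}|$ so that $f_u=g\circ\mu\circ u$, use $W^{2,p}\hookrightarrow C^{1,\alpha}$ for $u$, note that the lower bound $|\nabla u|>c_0$ on $\Omega_{t_0}$ makes the coarea identity
\begin{equation*}
\mu(t)=|\{u\geq t_0\}|+\int_t^{t_0}\int_{\{u=s\}}\frac{d\mathcal{H}^{n-1}}{|\nabla u|}\,ds
\end{equation*}
valid, and reduce everything to the continuity in $s$ of the inner surface integral. This is exactly the crux. (You also correctly flag that the statement tacitly needs $g$ to be $C^1$, not merely continuous as in the paper's standing hypotheses; without that, $f_u=g\circ\mu\circ u$ cannot be $C^1$.)

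The gap is in the mechanism you use for that continuity. Since $u\in C^{1,\alpha}$ with $\alpha<1$, the field $V$ is only $C^{0,\alpha}$, which is \emph{not} enough for a well-posed flow: Peano gives existence of integral curves but not uniqueness, and a fortiori $\Phi_s$ is not known to be a map, let alone a bi-Lipschitz bijection with tangential Jacobian converging uniformly to $1$. Flows of merely H\"older fields, when they do exist uniquely, generically lose regularity (the modulus degrades exponentially in time), so both the ``bi-Lipschitz'' and the ``Jacobian $\to 1$ uniformly'' claims are unjustified at the level of regularity you actually have.

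This can be repaired without the flow. Because $u\in C^{1,\alpha}$ and $|\nabla u|\geq c_0$ on $\Omega_{t_0}$, the implicit function theorem lets you locally write each level set $\{u=s\}$ as a $C^1$ graph $x_n=h(x',s)$ over a fixed patch, with $h$ and $\nabla_{x'}h$ jointly continuous in $(x',s)$. Then
\begin{equation*}
\int_{\{u=s\}\cap U}\frac{d\mathcal{H}^{n-1}}{|\nabla u|}
=\int_{U'}\frac{\sqrt{1+|\nabla_{x'}h(x',s)|^2}}{|\nabla u(x',h(x',s))|}\,dx',
\end{equation*}
and the integrand is continuous in $s$ and uniformly bounded by a constant depending on $c_0$ and $\|u\|_{C^{1,\alpha}}$; dominated convergence (after a finite cover of the compact level set) gives continuity of $\mu'$, hence $\mu\in C^1$ and $f_u\in C^1(\Omega_{t_0})$ once $g\in C^1$. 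Note also that continuity, not Lipschitz regularity of $\mu'$, is all you get from $C^{1,\alpha}$ data, so you should drop the parenthetical ``(in fact, Lipschitz).''
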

In other words, the theorem asserts that if we have an uniform lower bound for the gradient(away from the maximum of $u$), then we get: regularity for the level sets of $u$ and we discard a possible ``flatness'' which ruins the smoothness of $f_u$. The proof presented in \cite{lsbootstrap} includes an approximation argument by $C^\infty_0$ functions and coarea formula. 

This last theorem translates into regularity for our problem. We state this in the following corollary.
\begin{cor}
If we have a solution $u$ to our problem (\ref{pde}), with $0$ boundary condition and a gradient lower bound as in Theorem \ref{gradientbound}, then $f_u\in C^1(\Omega_{t_0})$, and therefore $u\in C^{2,\alpha}(\Omega_{t_0})$.
\end{cor}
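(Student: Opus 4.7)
The plan is to combine the two pieces already on the table. First I would observe that the main existence theorem together with Sobolev embedding gives $u\in W^{2,p}(\Omega)\cap C^{1,\alpha}(\overline{\Omega})$, so $u\in W^{2,p}_0(\Omega)$ (using the zero boundary condition) and $\nabla u$ is continuous, which makes the hypotheses of Theorem \ref{gradientbound} meaningful at every point. Applied verbatim to our $u$ in $\Omega_{t_0}$, Theorem \ref{gradientbound} immediately yields $f_u\in C^{1}(\Omega_{t_0})$. In particular $f_u\in C^{0,\alpha}_{\mathrm{loc}}(\Omega_{t_0})$ for every $\alpha<1$.

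Next I would upgrade the regularity of $u$ inside $\Omega_{t_0}$. Since $F$ is convex and uniformly elliptic with $F(0)=0$, and $u$ is a $W^{2,p}$-viscosity solution of
\begin{equation*}
F(D^2u)=f_u\quad\text{in }\Omega_{t_0},
\end{equation*}
with $f_u\in C^{0,\alpha}_{\mathrm{loc}}$, the interior Evans--Krylov (Schauder-type) theorem for convex fully nonlinear equations gives $u\in C^{2,\alpha}_{\mathrm{loc}}(\Omega_{t_0})$. The argument is purely local: fix $x_0\in\Omega_{t_0}$, pick a small ball $B\subset\subset\Omega_{t_0}$ around $x_0$, freeze $\tilde f(x):=f_u(x)$ and apply the interior $C^{2,\alpha}$ estimate to the viscosity solution of $F(D^2u)=\tilde f$ in $B$; the resulting bound is in terms of $\|u\|_{L^\infty(B)}$ and $\|\tilde f\|_{C^{0,\alpha}(B)}$, both finite by the previous step and Theorem \ref{main}.

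The only points deserving care are, first, checking that $W^{2,p}$-viscosity solutions fit into the framework of the Evans--Krylov estimate we quote (this follows because under a continuous right-hand side our $W^{2,p}$-viscosity solution is also a classical $C$-viscosity solution in $\Omega_{t_0}$, to which the standard Evans--Krylov machinery applies) and, second, the implicit convention that $\Omega_{t_0}=\{u<t_0\}$ is open thanks to the continuity of $u$. The main obstacle, were there any, would be precisely this bridge between the $W^{2,p}$-viscosity notion and the classical $C^{2,\alpha}$ interior theory for convex operators; everything else is a direct concatenation of Theorem \ref{gradientbound} with Schauder/Evans--Krylov estimates and requires no new calculation.
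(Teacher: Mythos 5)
Your proposal is correct and follows essentially the same route as the paper: apply Theorem \ref{gradientbound} to obtain $f_u\in C^1(\Omega_{t_0})$, note that a continuous right-hand side lets one pass from the $W^{2,p}$-viscosity notion to the classical one, and then invoke the interior Schauder/Evans--Krylov theory for convex uniformly elliptic operators (Chapter 8 of Caffarelli--Cabr\'e) to conclude $u\in C^{2,\alpha}$ locally in $\Omega_{t_0}$. Your write-up merely makes explicit the localization and the equivalence of solution notions that the paper's one-line proof leaves implicit.
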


\begin{proof}
In order to get $C^{2,\alpha}$ estimates we just need to apply the classical theory of viscosity solutions for fully nonlinear equations as in Chapter 8 from \cite{cc}. Recall that at this point we have a right hand side in $C^1(\Omega_{t_0})$ which allows us to use classical viscosity solutions instead of $W^{2,p}$-viscosity solutions.
\end{proof}

The second theorem states that, under certain conditions, a barrier argument implies lower bounds as in Theorem \ref{gradientbound}.

\begin{thm}
If $\Omega$ has a uniform inner ball condition (i.e., for any point $y$ in $\partial\Omega$, there exists a ball $B_\varepsilon\subset\Omega$ with $\varepsilon>\varepsilon_0>0$ and $y\in\partial B_\varepsilon$), then $\vert\nabla u\vert>c_0>0$ in a neighborhood of $\partial\Omega$, where $c_0=c_0(\Vert u\Vert_{C^{1,\alpha}(\overline{\Omega})})$. We consider the case where $g(t)=-t$ and $u=0$ on $\partial\Omega$.
\end{thm}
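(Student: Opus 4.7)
The plan is a Hopf-type barrier argument at every boundary point, followed by propagation into a neighborhood using the $C^{1,\alpha}$ regularity of $u$. First, because $g(t)=-t\leq 0$ we have $F(D^2u)\leq 0$, so the strong maximum principle together with the Remark above (which forbids flat regions) gives $u>0$ in $\Omega$ with $u=0$ on $\partial\Omega$; in particular, $\nabla u(y)$ is parallel to the inner normal at each $y\in\partial\Omega$, so proving a lower bound for $|\nabla u(y)|$ is a Hopf question.

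Fix $y\in\partial\Omega$, and by the inner ball condition let $B_\varepsilon(z_y)\subset\Omega$ be an interior tangent ball at $y$, with $\varepsilon\leq\varepsilon_0$ to be chosen. As a barrier I would take the concave radial cap
\[
w(x)=\kappa\bigl(\varepsilon^2-\vert x-z_y\vert^2\bigr),
\]
a simplified analog of the explicit function in Example \ref{ball}. Then $D^2w=-2\kappa I$, and the structure condition gives $F(D^2w)\geq\mathcal{M}^-(-2\kappa I)=-2\kappa n\Lambda$. Setting $v:=u-w$,
\[
\mathcal{M}^-(D^2v)\leq F(D^2u)-F(D^2w)\leq-\vert u\geq u(x)\vert+2\kappa n\Lambda.
\]
If one can arrange $2\kappa n\Lambda\leq\vert u\geq u(x)\vert$ throughout $B_\varepsilon(z_y)$, then $\mathcal{M}^-(D^2v)\leq 0$, and since $v\geq 0$ on $\partial B_\varepsilon(z_y)$ (because $u\geq 0$ and $w=0$ there) the minimum principle for $\mathcal{M}^-$-supersolutions forces $v\geq 0$ in $B_\varepsilon(z_y)$. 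Combined with $u(y)=w(y)=0$ and $\vert\nabla w(y)\vert=2\kappa\varepsilon$, this Hopf-type comparison yields $\vert\nabla u(y)\vert\geq 2\kappa\varepsilon$.

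To realize the pointwise bound $\vert u\geq u(x)\vert\geq 2\kappa n\Lambda$, I use that $u$ is Lipschitz with constant $L\leq\Vert u\Vert_{C^{1,\alpha}(\overline\Omega)}$ and $u(y)=0$, so $u(x)\leq 2L\varepsilon$ for $x\in B_\varepsilon(z_y)$. Since $u>0$ in $\Omega$, we have $\vert\{u\geq s\}\vert\nearrow\vert\Omega\vert$ as $s\searrow 0$; fix $s_0>0$ with $\vert\{u\geq s_0\}\vert\geq\vert\Omega\vert/2$. Taking $\varepsilon=\min\bigl(\varepsilon_0,\,s_0/(2L)\bigr)$ forces $u(x)\leq s_0$, and hence $\vert u\geq u(x)\vert\geq\vert\Omega\vert/2$, on $B_\varepsilon(z_y)$; then $\kappa=\vert\Omega\vert/(4n\Lambda)$ closes the estimate and produces $\vert\nabla u(y)\vert\geq c_1:=\vert\Omega\vert\varepsilon/(2n\Lambda)>0$, uniformly in $y\in\partial\Omega$.

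To propagate from $\partial\Omega$ to a neighborhood I use $\nabla u\in C^{0,\alpha}(\overline\Omega)$ with $K:=\Vert u\Vert_{C^{1,\alpha}(\overline\Omega)}$: for $x$ with $\mathrm{dist}(x,\partial\Omega)\leq\delta$ and $y\in\partial\Omega$ a nearest point, $\vert\nabla u(x)\vert\geq\vert\nabla u(y)\vert-K\delta^\alpha\geq c_1-K\delta^\alpha\geq c_1/2=:c_0$ as soon as $\delta\leq\bigl(c_1/(2K)\bigr)^{1/\alpha}$. The main obstacle is the coupling from the nonlocal right-hand side: one must quantify the auxiliary threshold $s_0$ by quantities controlled in the statement. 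The natural route is to use Example \ref{ball} once more, this time as a subsolution placed on a ball inscribed in $\Omega$, producing a positive lower bound for $\Vert u\Vert_{L^\infty}$ and hence a quantitative $s_0$ so that $c_0$ may indeed be taken to depend only on $\Vert u\Vert_{C^{1,\alpha}(\overline\Omega)}$ together with $\Omega$, $F$, and $g$.
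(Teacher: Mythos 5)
Your proof is correct and follows essentially the same route as the paper: an interior tangent ball at each boundary point, a radial barrier made into a subsolution by using that $\vert u\geq u(x)\vert$ stays close to $\vert\Omega\vert$ near $\partial\Omega$, comparison via the Pucci operators and the structure condition to get a Hopf-type lower bound for $\vert\nabla u\vert$ on $\partial\Omega$, and propagation into a collar using the $C^{0,\alpha}$ modulus of $\nabla u$. The only real difference is cosmetic: you use a quadratic cap $\kappa(\varepsilon^2-\vert x-z_y\vert^2)$ with the structure condition applied to $v=u-w$, whereas the paper compares $u$ directly with the explicit degree-$(n+2)$ radial solution $\tilde u$ of Example \ref{ball}; your closing worry about whether $s_0$ is quantitatively controlled is a legitimate subtlety that the paper also treats only with a ``without loss of generality'' shrinking of $\varepsilon_0$.
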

\begin{proof}
If we pick any point $y\in\partial\Omega$ we can touch it with a ball $B_{\varepsilon_0}\subset\Omega$. As in Example \ref{ball} we can build a an explicit solution $\tilde{u}$ in $B_{\varepsilon_0}$ for $F=\mathcal{M}^-$. Now we apply comparison between $u$ and $\tilde{u}$ in order to get gradient estimates. Without loss of generality we can take $\varepsilon_0$ small enough, such that 
$$\vert u\geq u(x)\vert\geq \frac{1}{2}\vert\Omega\vert\geq\vert B_{\varepsilon_0}\vert$$
for every $x\in B_{\varepsilon_0}$. This is possible because of the continuity of $u$ and of the right hand side, i.e.,
$$\vert u\geq t\vert \xrightarrow[t\longrightarrow 0]{}\vert\Omega\vert.$$
If this is the case then
\begin{align*}
\mathcal{M}^-(D^2 \tilde{u}(x))&=-\vert \{\tilde{u}\geq \tilde{u}(x)\}\cap B_{\varepsilon_0}\vert\\
&\geq -\frac{1}{2}\vert\Omega\vert\\
&\geq -\vert u\geq {u}(x)\vert\\
&=F(D^2u(x))\\
&\geq \mathcal{M}^-(D^2u(x))
\end{align*}
in $ B_{\varepsilon_0}$. In addition, we have that $0=\tilde{u}\leq u$ at $\partial B_{\varepsilon_0}$. So comparison applies and forces $\tilde{u}\leq u$ in $ B_{\varepsilon_0}$. Therefore we also have a lower bound for the gradient at the boundary, with the estimate
$$\vert\nabla u\vert\geq \tilde{u}_{-\nu}=\frac{\omega_n}{2n\Lambda}{\varepsilon_0}^{n+1}=c_0>0$$
where $-\nu$ is the inner normal to $\partial\Omega$. Finally we can extend a lower bound(say $c_0/2$) to a neighborhood of the boundary of $\Omega$ which will depend on the $C^{1,\alpha}(\overline{\Omega})$ norm of $u$. 
\end{proof}

\begin{rmk}
We can repeat this argument as long we have uniform inner ball conditions for the level sets $\{u=t\}$, and so, $C^{2,\alpha}$ regularity for the solution in that annulus.
\end{rmk}

\begin{rmk}
We expect this condition to be satisfied for convex domains, where we deduce the solutions will have convex level sets.  On the other hand, for nonconvex domains, in particular for dumbbell shaped domains, we expect to have a singular critical point where the superlevel sets separate into two components.
\end{rmk}

\appendix
\section{Appendix}

In this appendix section we gather the results from the literature that will be used in the proof of the main Theorem \ref{main}. First we have an existence and uniqueness result when the right hand side is fixed. We refer to Winter's version because it includes additional $W^{2,p}$ bounds for the unique solution.
\begin{thm}[Winter 4.6 in \cite{winter}]\label{winter}
Let $F$ be a convex operator satisfying the structure condition \eqref{structurecondition} and $F(0)=0$, $f\in L^{p}(\Omega)$ for $p>n$, $\psi\in W^{2,p}(\Omega)$ and $\partial\Omega\in C^{1,1}$. Then, there exists a unique $W^{2,p}$-viscosity solution to
\begin{equation*}
  \begin{cases}
    F(D^2u(x))=f(x), & \text{in $\Omega$}.\\
    u=\psi, & \text{on $\partial\Omega$}.
  \end{cases}
\end{equation*}
Moreover, $u\in W^{2,p}(\Omega)$ and
\begin{equation*}
  \Vert u\Vert_{W^{2,p}(\Omega)}\leq C\Big[\Vert u\Vert_{L^{\infty}(\Omega)}+\Vert \psi\Vert_{W^{2,p}(\Omega)}+\Vert f\Vert_{L^{p}(\Omega)}\Big]
\end{equation*}
for $C=C(n,\lambda,\Lambda,p,\Omega).$
\end{thm}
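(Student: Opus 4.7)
The plan is to obtain the solution by approximation from the smooth case, combined with uniform a priori estimates that survive the passage to the limit. I would first mollify the data, producing $f_k\in C^\infty(\overline{\Omega})$ with $f_k\to f$ in $L^p(\Omega)$ and $\psi_k\in C^\infty(\overline{\Omega})$ with $\psi_k\to\psi$ in $W^{2,p}(\Omega)$. For each $k$, because $F$ is convex and uniformly elliptic and $\partial\Omega\in C^{1,1}$, the Dirichlet problem $F(D^2 u_k)=f_k$, $u_k=\psi_k$ on $\partial\Omega$, admits a classical solution $u_k\in C^{2,\alpha}(\overline{\Omega})$ by combining Evans--Krylov interior estimates, Krylov's boundary $C^{2,\alpha}$ estimates, and the method of continuity. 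Each $u_k$ is in particular a $W^{2,p}$-viscosity solution.

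The heart of the argument is a uniform $W^{2,p}$ a priori bound, which I would split into three pieces. First, an $L^\infty$ bound of the form $\Vert u_k\Vert_{L^\infty(\Omega)}\leq \Vert\psi_k\Vert_{L^\infty(\partial\Omega)}+C\Vert f_k\Vert_{L^p(\Omega)}$ follows from the ABP estimate for $W^{2,p}$-viscosity solutions (Proposition 3.3 of \cite{ccks}). Second, an interior $W^{2,p}$ bound is the main Calder\'on--Zygmund-type result of \cite{ccks}, obtained by a covering and iteration argument that exploits the interior $C^{1,1}$ estimates available for convex $F$ with continuous right-hand side. Third, and this is where Winter extends \cite{ccks}, one must push the estimate all the way to $\partial\Omega$. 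The $C^{1,1}$ assumption on the boundary allows local flattening; after subtracting a smooth extension of $\psi_k$, one reduces to zero boundary data on a half-ball, where a boundary version of the covering argument together with boundary $C^{1,1}$ estimates for the flat-boundary homogeneous problem delivers the global bound, with constant depending only on $n$, $\lambda$, $\Lambda$, $p$ and $\Omega$.

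With $\Vert u_k\Vert_{W^{2,p}(\Omega)}\leq C$ uniform in $k$, a subsequence converges weakly in $W^{2,p}(\Omega)$ and, since $p>n$, strongly in $C^{1,\alpha}(\overline{\Omega})$ by Rellich--Kondrachov. The stability theorem for $W^{2,p}$-viscosity solutions under locally uniform convergence of the solutions and $L^p$ convergence of the right-hand sides (the same convergence theorem used repeatedly in the present paper) ensures the limit $u$ is a $W^{2,p}$-viscosity solution of $F(D^2 u)=f$, attaining the boundary value $\psi$ by uniform convergence on $\overline{\Omega}$; weak lower semicontinuity of the norm preserves the stated estimate. Uniqueness follows from the comparison principle for $W^{2,p}$-viscosity solutions in \cite{ccks}: if $u$ and $v$ both solve the problem, the structure condition \eqref{structurecondition} forces $w=u-v$ to satisfy $\mathcal{M}^-(D^2 w)\leq 0\leq \mathcal{M}^+(D^2 w)$ in the viscosity sense with $w=0$ on $\partial\Omega$, so ABP applied in both directions yields $w\equiv 0$.

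The main obstacle, and Winter's real contribution over \cite{ccks}, is the global $W^{2,p}$ estimate up to the boundary. The interior iteration is already delicate, and adapting it to a neighborhood of a merely $C^{1,1}$ boundary requires constructing boundary barriers from the inner-ball condition to control normal and tangential behavior separately, together with a boundary version of the distribution-function inequality for the maximal function of $\vert D^2 u\vert^p$ used in the interior covering. Once this localized boundary estimate is obtained, a finite covering of $\overline{\Omega}$ by interior balls and boundary half-balls produces the desired global bound.
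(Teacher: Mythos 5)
This theorem is not proved in the paper; it appears in the Appendix as a quoted result from Winter \cite{winter}, so there is no internal proof to compare your sketch against. Evaluating your sketch on its own terms: the overall architecture (approximate, derive a uniform $W^{2,p}$ a priori bound split into an ABP $L^\infty$ bound, the interior Calder\'on--Zygmund estimate of \cite{ccks}, and Winter's boundary extension of it, then pass to the limit by compactness and the stability theorem, and finally obtain uniqueness from ABP via the structure condition) is the right shape, and it describes accurately where Winter's contribution sits relative to \cite{ccks}.

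There is, however, a genuine gap at your first step. You mollify $f$ and $\psi$ but leave $\Omega$ untouched, then assert that the smooth-data Dirichlet problem admits a classical solution $u_k\in C^{2,\alpha}(\overline{\Omega})$ by Evans--Krylov plus Krylov boundary estimates plus the method of continuity. With $\partial\Omega$ only $C^{1,1}$ this cannot be expected: after flattening, the transformed operator has merely Lipschitz coefficients, Krylov's boundary $C^{2,\alpha}$ estimates (which need a boundary regular enough for the flattening to preserve H\"older continuity of the coefficients, at least $C^{2,\alpha}$) are unavailable, and a solution with $D^2u$ continuous up to $\partial\Omega$ need not exist at all. To salvage the mollification scheme you would also have to exhaust $\Omega$ by an increasing family of smooth subdomains, which introduces nontrivial bookkeeping of boundary values and of the constants in the a priori estimate as the domains vary. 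The cleaner route, and the one actually taken in the literature, is not to pass through classical solvability at all: existence of an $L^p$-viscosity solution for measurable $f\in L^p$ on a $C^{1,1}$ domain is already supplied by the Perron-type construction in \cite{ccks}, uniqueness follows from the comparison principle there once one of the two competitors is known to be strong, and the entire content of Winter's theorem is the global $W^{2,p}$ a priori bound for that viscosity solution, whose flattening-plus-covering mechanism your last two paragraphs do describe correctly.
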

Second, we introduce a classic fixed point theorem that will be crucial to extract a solution to our problem, out of a family of approximations.
\begin{thm}[Schaefer Fixed Point Theorem]\label{fixed}
Let $T:V\longrightarrow V$ a continuous and compact mapping, with $V$ a Banach space, such that the set
$$\{v\in V: \exists\gamma\in[0;1] \text{ such that }v=\gamma T(v)\}$$
is bounded. Then $T$ has a fixed point.
\end{thm}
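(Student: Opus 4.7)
The plan is to deduce Schaefer's theorem from Schauder's fixed point theorem (a continuous, compact self-map of a closed, convex, bounded subset of a Banach space has a fixed point) via a radial retraction that forces $T$ to land in a prescribed ball.

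First, I would exploit the boundedness hypothesis: since the set
$$S=\{v\in V:v=\gamma T(v)\text{ for some }\gamma\in[0,1]\}$$
is bounded, fix $R>0$ strictly larger than $\sup_{v\in S}\|v\|$, so that every element of $S$ lies strictly inside $B_R(0)$. Next, introduce the continuous radial retraction $\rho:V\to\overline{B_R(0)}$ given by $\rho(w)=w$ when $\|w\|\leq R$ and $\rho(w)=Rw/\|w\|$ when $\|w\|>R$, and define $\widetilde{T}:=\rho\circ T$. The composition is continuous (both $T$ and $\rho$ are) and compact (since $T$ sends bounded sets to precompact sets and $\rho$ is continuous with bounded image), and by construction $\widetilde{T}$ maps $\overline{B_R(0)}$ into itself.

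Now apply Schauder's theorem to $\widetilde{T}$ on the closed, convex, bounded set $\overline{B_R(0)}$ to obtain $v^*\in\overline{B_R(0)}$ with $\widetilde{T}(v^*)=v^*$. It remains to upgrade this to a fixed point of the original $T$, via a dichotomy. If $\|T(v^*)\|\leq R$, then $\rho(T(v^*))=T(v^*)$, so $v^*=T(v^*)$ and we are done. Otherwise $\|T(v^*)\|>R$, and
$$v^*=\widetilde{T}(v^*)=\frac{R}{\|T(v^*)\|}\,T(v^*)=\gamma T(v^*)$$
with $\gamma=R/\|T(v^*)\|\in(0,1)$. Thus $v^*\in S$, so $\|v^*\|<R$ by the choice of $R$; but the explicit formula forces $\|v^*\|=R$, a contradiction. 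Hence only the first case can occur.

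The main obstacle (and really the entire content of the argument) is this closing dichotomy: one must choose $R$ \emph{strictly} larger than the diameter of $S$ so that Case 2 yields a genuine contradiction rather than a borderline equality, and this is precisely where the hypothesis on $S$ enters. Everything else is bookkeeping: verifying that the retracted map $\widetilde{T}$ inherits compactness and continuity from $T$, and citing Schauder's theorem for self-maps of a closed, convex, bounded subset of a Banach space.
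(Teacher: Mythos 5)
Your proof is correct: this is the standard derivation of Schaefer's theorem from Schauder's fixed point theorem via the radial retraction onto $\overline{B_R(0)}$, and the closing dichotomy is handled properly (choosing $R$ strictly larger than $\sup_{v\in S}\Vert v\Vert$ so that case two gives $\Vert v^*\Vert=R$ and $\Vert v^*\Vert<R$ simultaneously). Note that the paper itself offers no proof — Theorem \ref{fixed} is stated in the appendix purely as a cited classical result — so there is no in-paper argument to compare against; your write-up is the canonical textbook proof and fills that gap correctly.
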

Third, we will need the next powerful convergence result that will be used for proving continuity in the fixed point argument. And later for proving convergence of the solutions to auxiliary problems.
\begin{thm}[Caffarelli-Crandall-Kocan-Swiech 3.8 in \cite{ccks}]\label{convergence}
Let $\Omega_k\subset\Omega_{k+1}$ a sequence of subdomains of $\Omega$ converging to $\Omega$. Let $F$ and $F_k$ be uniformly elliptic operators with the same ellipticity constants and satisfying the structure condition \eqref{structurecondition}. Let $f\in L^p(\Omega)$ and $f_k\in L^p(\Omega_k)$ for $p>n$. Let $u_k\in C^0(\Omega_k)$ be $W^{2,p}$-viscosity subsolutions(supersolutions) of
$$F_k(D^2u(x))=f_k(x)$$
in $\Omega_{k+1}$, with $u_k$ converging locally uniformly to $u$ in $\Omega$. Finally assume that for every $B_r(x_0)\subset\Omega$, and for every $\varphi\in W^{2,p}(B_r(x_0))$ we have
$$\Vert[F_k(D^2\varphi(x))-f_k(x)-F(D^2\varphi(x))+f(x)]^+\Vert_{L^{p}(B_r(x_0))}\longrightarrow 0$$
$$\Big(\Vert[F_k(D^2\varphi(x))-f_k(x)-F(D^2\varphi(x))+f(x)]^-\Vert_{L^{p}(B_r(x_0))}\longrightarrow 0\Big),$$
Then $u$ is a $W^{2,p}$-viscosity subsolution(supersolution) of
$$F(D^2u(x))=f(x)$$
in $\Omega.$
\end{thm}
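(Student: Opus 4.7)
The plan is to argue by contradiction, using the alternative characterization of a $W^{2,p}$-viscosity subsolution from Remark 1.2, and to convert the one-sided $L^p$-hypothesis into a pointwise a.e.\ inequality by means of a small auxiliary corrector built from the measurable ABP estimate. If the conclusion fails, Remark 1.2 produces $\varphi\in W^{2,p}_{\mathrm{loc}}(\Omega)$, $\varepsilon_0>0$, and an open set $O\subset\Omega$ with $F(D^2\varphi)-f\leq -\varepsilon_0$ a.e.\ in $O$, while $u-\varphi$ has a local maximum at some $x_0\in O$. I would first perturb by setting $\tilde\varphi(x):=\varphi(x)+\mu|x-x_0|^2$ with $\mu>0$ small: the structure condition \eqref{structurecondition} yields $F(D^2\tilde\varphi)-f\leq -\varepsilon_0/2$ a.e.\ on some closed ball $\overline{B_r(x_0)}\subset O$, and simultaneously $u-\tilde\varphi$ acquires a \emph{strict} local maximum at $x_0$ on $\overline{B_r(x_0)}$.

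The heart of the proof is building a corrector $\psi_k$ that absorbs the $L^p$-defect. Let $h_k:=[F_k(D^2\tilde\varphi)-f_k-F(D^2\tilde\varphi)+f]^+\geq 0$, so $\|h_k\|_{L^p(B_r(x_0))}\to 0$ by hypothesis, and pointwise $F_k(D^2\tilde\varphi)-f_k\leq F(D^2\tilde\varphi)-f+h_k$. Let $\psi_k\in W^{2,p}(B_r(x_0))$ solve
$$\mathcal{M}^+(D^2\psi_k)=-h_k\ \text{ in } B_r(x_0),\qquad \psi_k=0\ \text{ on }\partial B_r(x_0).$$
The measurable ABP estimate (Proposition 3.3 of \cite{ccks}) yields $\|\psi_k\|_{L^\infty}\leq Cr\|h_k\|_{L^p}\to 0$. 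Setting $\hat\varphi_k:=\tilde\varphi+\psi_k$, the structure condition gives
$$F_k(D^2\hat\varphi_k)-f_k\leq F_k(D^2\tilde\varphi)-f_k+\mathcal{M}^+(D^2\psi_k)\leq F(D^2\tilde\varphi)-f+h_k-h_k\leq -\tfrac{\varepsilon_0}{2}$$
a.e.\ in $B_r(x_0)$.

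Armed with this pointwise a.e.\ inequality, Remark 1.2 applied to the $W^{2,p}$-viscosity subsolution $u_k$ forbids $u_k-\hat\varphi_k$ from attaining a local maximum anywhere in $B_r(x_0)$. Continuity on the compact ball then forces
$$\sup_{\overline{B_r(x_0)}}(u_k-\hat\varphi_k)=\sup_{\partial B_r(x_0)}(u_k-\hat\varphi_k)=\sup_{\partial B_r(x_0)}(u_k-\tilde\varphi),$$
using $\psi_k|_{\partial B_r(x_0)}=0$. Letting $k\to\infty$, the locally uniform convergence $u_k\to u$ together with $\|\psi_k\|_\infty\to 0$ gives $\sup_{\overline{B_r(x_0)}}(u-\tilde\varphi)=\sup_{\partial B_r(x_0)}(u-\tilde\varphi)$, contradicting the strict interior maximum of $u-\tilde\varphi$ at $x_0$. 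The supersolution case is symmetric.

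The main obstacle, and the reason a naive subsequential a.e.\ argument fails, is that one has only $L^p$-smallness of $h_k$, not $L^\infty$-smallness; hence $F_k(D^2\tilde\varphi)-f_k$ need not stay below any fixed negative threshold a.e.\ on an open neighborhood. The corrector $\psi_k$ trades $L^p$-smallness of the source for $L^\infty$-smallness of the solution via ABP, while its Hessian exactly offsets the defect through the structure condition. This device---rather than any pointwise convergence---is what makes the $W^{2,p}$-viscosity framework compatible with merely measurable ingredients.
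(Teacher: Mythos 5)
Your proof is correct. The paper does not prove this result itself---it is cited verbatim from Caffarelli--Crandall--Kocan--Swiech in the appendix---and your argument reproduces the essential device of the CCKS proof of their Theorem~3.8: absorb the $L^p$-defect $h_k=[F_k(D^2\tilde\varphi)-f_k-F(D^2\tilde\varphi)+f]^+$ into an ABP corrector $\psi_k$ solving $\mathcal{M}^+(D^2\psi_k)=-h_k$ with zero boundary data, so that the structure condition cancels the defect a.e.\ while $\Vert\psi_k\Vert_{L^\infty}\to 0$, after which the alternate-definition and boundary-maximum argument close the contradiction. Two small items worth stating explicitly: the existence and $W^{2,p}$-regularity of $\psi_k$ follows from Theorem~\ref{winter} applied to the convex operator $\mathcal{M}^+$ on the ball; and $\overline{B_r(x_0)}\subset\Omega_{k+1}$ for $k$ large (since $\Omega_k\nearrow\Omega$) is what licenses invoking Remark~1.2 for $u_k$ on $B_r(x_0)$.
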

Finally, we note that 
\begin{rmk}
Due to a result by Escauriaza in \cite{escauriaza}, we can extend $p$ to the case where $p>n-\varepsilon_0$ for some universal $\varepsilon_0$ in Theorems \ref{winter} and \ref{convergence}.
\end{rmk}

\bibliographystyle{plain}
\bibliography{bib}

\end{document}